\newtheorem{theorem}{Theorem}[section]
\newtheorem{Corollary}[theorem]{Corollary}
\theoremstyle{definition}
\newtheorem{remark}[theorem]{Remark}
\numberwithin{equation}{section}
\begin{document}

\date{}
\title[Duffin-Schaeffer conjecture]
{A note on the Duffin-Schaeffer conjecture}

\author{Liangpan Li}

\address{Department of Mathematical Sciences,
Loughborough University, LE11 3TU, UK}
 \email{mall4@lboro.ac.uk}

\subjclass[2000]{11J83}

\keywords{Duffin-Schaeffer conjecture, Hausdorff dimension}

\date{}

\begin{abstract}
Given a sequence of real numbers $\{\psi(n)\}_{n\in\mathbb{N}}$ with
$0\leq \psi(n)<1$, let $W(\psi)$ denote the set of $x\in[0,1]$ for
which $|xn-m|<\psi(n)$ for infinitely many coprime pairs
$(n,m)\in\mathbb{N}\times\mathbb{Z}$. The purpose of this note is to
show that if there exists an $\epsilon>0$ such that
$\sum_{n\in\mathbb{N}}\psi(n)^{1+\epsilon}\cdot\frac{\varphi(n)}{n}=\infty,$
then the Lebesgue measure of $W(\psi)$ equals 1.
\end{abstract}

\maketitle


\section{Introduction to the Duffin-Schaeffer conjecture}

Given arithmetic functions $\psi:\mathbb{N}\rightarrow[0,\infty)$
and $\omega:\mathbb{N}\rightarrow\mathbb{N}$ with $\omega$
increasing, let $W(\psi,\omega)$ denote the set of $x\in[0,1]$ for
which $|x\cdot\omega(n)-m|<\psi(n)$ for infinitely many coprime
pairs $(\omega(n),m)\in\mathbb{N}\times\mathbb{Z}$. If $\omega$ is
the identity map then we simply write $W(\psi)$ for
$W(\psi,\omega)$. In this note we will study a long-standing
conjecture of Duffin and Schaeffer (\cite{DuffinSchaeffer}), who
claimed
\[\sum_{n\in\mathbb{N}}\psi(n)\cdot\frac{\varphi(\omega(n))}{\omega(n)}=\infty\Leftrightarrow{\mathcal M}(W(\psi,\omega))=1,\]
where $\varphi$ denotes Euler's totient function, $\mathcal M$ is
the Lebesgue measure on $\mathbb{R}$. Since the necessity part of
this conjecture follows from the first Borel-Cantelli lemma (see
e.g. \cite[Theorem 2.4]{Harman98}), we need only focus on the
sufficiency one.

There are many partial results towards the sufficiency part of the
Duffin-Schaeffer conjecture, for example, by assuming any of the
following additional conditions:
\begin{itemize}
\item A1: $\omega$ is the identity map, $n\mapsto n\psi(n)$ is non-increasing
(Khintchine \cite{Khintchine});
\item A2: $\omega$ is the identity map, $\psi(n)\leq \frac{c}{n}$ for some $c>0$
(Vaaler \cite{Vaaler});
\item A3: $\omega(m)$ and $\omega(n)$ are coprime for any $m\neq n$, $\psi$ is
arbitrary (Strauch \cite{Strauch});
\item A4: $\{\omega(n)\}$ is a lacunary sequence, $\psi$ is arbitrary
(Harman \cite{Harman});
\item A5: $\psi(n)\geq c(\frac{\varphi(\omega(n))}{\omega(n)})^R$ for
some $c,R>0$ (Harman \cite{Harman}).
\end{itemize}
We should also note the following beautiful breakthroughs:
\begin{itemize}
\item PV: The higher-dimensional Duffin-Schaeffer conjecture, also known as the Sprind\u{z}uk conjecture (\cite{Sprindzuk}), was solved
by Pollington and Vaughan (\cite{PV1,PV2});
\item BH: $\sum_{n\in\mathbb{N}}\psi(n)=\infty\Rightarrow \mbox{Hausdorff dimension of}\
W(\psi)=1$ (Baker-Harman \cite[Theorem 10.7]{Harman98}).
\end{itemize}
Recently,  Haynes, Pollington and Velani (\cite[Corollary
1]{Haynes}, see also \cite{Harman98})  established the
Duffin-Schaeffer conjecture
 by assuming the following extra divergence condition:
\begin{itemize}
\item HPV1:
$\sum_{n\in\mathbb{N}}\big(\frac{\psi(n)}{n}\big)^{1+\epsilon}\cdot\varphi(n)=\infty$.
\end{itemize}
Then as an immediate application of the HPV1 theorem and the
Beresnevich-Velani Transference Principle (\cite[Theorem 2]{BV06}),
they  also studied the Hausdorff dimensional Duffin-Schaeffer
conjecture (\cite[Conjecture 2]{BV06}) and were able to improve the
BH theorem to (\cite[Theorem 2]{Haynes}):
\begin{itemize}
\item HPV2:
$\sum_{n\in\mathbb{N}}\big(\frac{\psi(n)}{n}\big)^{1-\epsilon}\cdot\varphi(n)=\infty\
(\forall\epsilon>0)\Rightarrow \mbox{Hausdorff dim. of}\ W(\psi)=1$.
\end{itemize}

The purpose of this note is to adapt some ideas of Harman in
\cite{Harman} to improve the HPV1 theorem as follows:
\begin{theorem}\label{maintheorem} Let $\psi:\mathbb{N}\rightarrow[0,1)$ be any function.
Then ${\mathcal M}(W(\psi))=1$ if there exists an $\epsilon>0$ such
that
\[\sum_{n\in\mathbb{N}}\psi(n)^{1+\epsilon}\cdot\frac{\varphi(n)}{n}=\infty.\]
\end{theorem}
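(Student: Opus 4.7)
The plan is to adapt the dyadic partition technique of Harman \cite{Harman}, after a preliminary reduction via Vaaler's theorem (A2). Write $A = \{n \in \mathbb{N} : \psi(n) \leq 1/n\}$ and $B = \mathbb{N} \setminus A$, and split the divergent sum accordingly; then at least one of the two partial sums is infinite. If the $A$-part diverges, then since $\psi(n)^{1+\epsilon} \leq \psi(n)$ we also have $\sum_{n \in A} \psi(n)\varphi(n)/n = \infty$, and because $\psi\chi_A(n) \leq 1/n$, Vaaler's theorem (A2) applied to $\psi\chi_A$ gives $\mathcal{M}(W(\psi\chi_A)) = 1$, whence $\mathcal{M}(W(\psi)) = 1$. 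So we may assume that the $B$-part of the sum diverges.

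On $B$ I would introduce the dyadic level sets $B_k = \{n \in B : 2^{-k-1} < \psi(n) \leq 2^{-k}\}$ and set $\sigma_k = \sum_{n \in B_k} \varphi(n)/n$; the hypothesis then becomes $\sum_{k \geq 0} 2^{-k(1+\epsilon)} \sigma_k = \infty$. Note that $n > 2^k$ on $B_k$, since $\psi(n) > 1/n$ there. Define the level-constant auxiliary $\psi^*(n) = 2^{-k-1}$ for $n \in B_k$ and $0$ otherwise; then $\psi^* \leq \psi$, so $W(\psi^*) \subseteq W(\psi)$, and it suffices to prove $\mathcal{M}(W(\psi^*)) = 1$. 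Writing $E_n = \bigcup_{(m,n) = 1,\, 0 \leq m \leq n} (m/n - \psi^*(n)/n,\, m/n + \psi^*(n)/n)$ one has $W(\psi^*) = \limsup_n E_n$ and $\mathcal{M}(E_n) \asymp \psi^*(n)\varphi(n)/n$, so that the divergent hypothesis is exactly the divergence of $\sum_n \mathcal{M}(E_n) \cdot \psi(n)^\epsilon$.

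The main body of the argument is then an overlap estimate of the form $\mathcal{M}(E_m \cap E_n) \leq C\,\mathcal{M}(E_m)\mathcal{M}(E_n) + \mathrm{err}(m,n)$, in which the error term is controlled uniformly across the dyadic blocks. Feeding this into a Chung--Erd\H{o}s-type divergent Borel--Cantelli lemma -- using $\psi(n)^\epsilon$ effectively as a weight across the dyadic decomposition -- produces $\mathcal{M}(\limsup_n E_n) > 0$; Gallagher's zero-one law then upgrades positivity to full Lebesgue measure. The principal obstacle is exactly this overlap control: the $\gcd$-contributions to $\mathcal{M}(E_m \cap E_n)$, which are precisely the obstacle to a proof of the full Duffin--Schaeffer conjecture, must be absorbed by the $\psi^\epsilon$-weight in the summation, and this is the point at which the ideas of Harman \cite{Harman} (where a similar size condition on $\psi$ was exploited in the A4 and A5 settings) are adapted to the present $\psi^{1+\epsilon}\cdot\varphi(n)/n$ divergence hypothesis.
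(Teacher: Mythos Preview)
Your plan identifies the right difficulty but does not resolve it. The entire content of a Duffin--Schaeffer--type argument lies in the overlap estimate, and your proposal stops exactly there: you write that the $\gcd$-contributions ``must be absorbed by the $\psi^\epsilon$-weight'' and that ``the ideas of Harman are adapted,'' but you give no mechanism. Note that on your set $B$ one has $\psi(n)>1/n$, so the weight $\psi(n)^\epsilon$ is \emph{larger} than the weight $n^{-\epsilon}$ that drives the HPV1 overlap argument; you therefore have strictly less room to absorb the Pollington--Vaughan product $\prod_{p}(1+1/p)$, and it is not at all clear that the averaging can be pushed through. The A4 and A5 situations of Harman that you invoke come with structural hypotheses (lacunarity, or $\psi(n)\ge c(\varphi(n)/n)^R$) that are absent on your $B$, so ``adapt Harman'' is not yet an argument.

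The paper avoids any new overlap computation. Its decomposition is not $\psi$ versus $1/n$ (plus Vaaler) but $\psi(n)$ versus $(\varphi(n)/n)^{2/\epsilon}$. On the set where $\psi(n)>(\varphi(n)/n)^{2/\epsilon}$, Harman's condition A5 applies directly with $R=2/\epsilon$. On the complement, where $\Psi(n):=\psi(n)\le(\varphi(n)/n)^{2/\epsilon}$, the one--dimensional problem is embedded into a two--dimensional one: put $\psi_1=\Psi$ and $\psi_2(n)=\Psi(n)^\epsilon\cdot n/\varphi(n)$, so that $\theta:=\psi_1\psi_2=\Psi^{1+\epsilon}\,n/\varphi(n)$ satisfies
\[
\sum_{n}\theta(n)\Big(\frac{\varphi(n)}{n}\Big)^2=\sum_{n}\Psi(n)^{1+\epsilon}\frac{\varphi(n)}{n}=\infty,
\]
and the inequality $\Psi\le(\varphi/n)^{2/\epsilon}$ is exactly what forces $\psi_2\le 1$ together with the balance condition $\max_j\theta/\psi_j\le\theta^{\delta}$ required by Harman's two--dimensional theorem. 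That theorem, used as a black box, yields $\mathcal M_2(W(\psi_1,\psi_2))=1$; the inclusion $W(\psi_1,\psi_2)\subset W(\psi_1)\times W(\psi_2)$ then forces $\mathcal M(W(\psi_1))=1$, hence $\mathcal M(W(\psi))=1$. In other words, the extra factor $\psi^\epsilon$ is not spent as a weight in a one--dimensional Borel--Cantelli argument; it is spent as the second coordinate of a higher--dimensional system to which an existing theorem already applies, so no new quasi--independence estimate is needed.
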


Before proceeding to the proof of Theorem \ref{maintheorem} in the
next section, let us first give some remarks and corollaries.

\begin{remark} We point out by constructing an example that the function $\psi$  assumed in Theorem \ref{maintheorem} cannot
be extended to all non-negative arithmetic functions. Since
$\{\frac{\varphi(n)}{n}:n\in\mathbb{N}\}$ is dense in $(0,1)$
(\cite{Schinzel}), there exists a sequence of positive integers
$n_2<n_3<n_4<\cdots$ so that
\[\frac{\varphi(n_k)}{n_k}\sim\frac{1}{k\cdot(\log k)^{2+\frac{\epsilon}{2}}},\]
where $\epsilon>0$ is any prescribed real number. We then define
$\psi(n_k)=\log k$ and
note\begin{align*}\sum_k\psi(n_k)^{1+\epsilon}\cdot\frac{\varphi(n_k)}{n_k}&\sim\sum_k\frac{1}{k\cdot(\log
k)^{1-\frac{\epsilon}{2}}}=\infty,\\
\sum_k\psi(n_k)\cdot\frac{\varphi(n_k)}{n_k}&\sim\sum_k\frac{1}{k\cdot(\log
k)^{1+\frac{\epsilon}{2}}}<\infty.\end{align*} Applying not Theorem
\ref{maintheorem} but the first Borel-Cantelli lemma gives
${\mathcal M}(W(\psi))=0$.
\end{remark}

\begin{Corollary}[HPV1] \label{corollary 13} Let $\psi:\mathbb{N}\rightarrow[0,\infty)$ be
any function. Then ${\mathcal M}(W(\psi))=1$ if there exists an
$\epsilon>0$ such that
\[\sum_{n\in\mathbb{N}}\big(\frac{\psi(n)}{n}\big)^{1+\epsilon}\cdot\varphi(n)=\infty.\]
\end{Corollary}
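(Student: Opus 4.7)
The plan is to deduce the corollary from Theorem \ref{maintheorem} by applying it to the truncated function $\tilde\psi(n) = \min(\psi(n), 1/2)$. Since $\tilde\psi(n) \in [0, 1/2] \subset [0,1)$ and $\tilde\psi \leq \psi$ pointwise, we have $W(\tilde\psi) \subseteq W(\psi)$, so it suffices to check the hypothesis of Theorem \ref{maintheorem} for $\tilde\psi$. We may first assume $\psi(n) < n$ for every $n$: otherwise $\psi(n) \geq n$ infinitely often, and since $\varphi(n) \geq 1$ guarantees a coprime $m \in \{0, \ldots, n-1\}$ with $|xn - m| \leq n \leq \psi(n)$ for every $x \in [0,1]$, this immediately gives $W(\psi) = [0,1]$.

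Partition $\mathbb{N} = A \sqcup B$ with $A = \{n : \psi(n) \leq 1/2\}$ and $B = \{n : \psi(n) > 1/2\}$. By hypothesis at least one of the partial sums $\sum_{n \in A}(\psi/n)^{1+\epsilon}\varphi$ or $\sum_{n \in B}(\psi/n)^{1+\epsilon}\varphi$ is infinite. The case of $A$ is transparent: on $A$ we have $\tilde\psi = \psi < 1$, and $\psi^{1+\epsilon}/n \geq \psi^{1+\epsilon}/n^{1+\epsilon}$ (since $n \geq 1$), whence
\[
\sum_{n \in \mathbb{N}} \tilde\psi(n)^{1+\epsilon}\,\frac{\varphi(n)}{n} \;\geq\; \sum_{n \in A}\psi(n)^{1+\epsilon}\,\frac{\varphi(n)}{n} \;\geq\; \sum_{n \in A}\Bigl(\frac{\psi(n)}{n}\Bigr)^{1+\epsilon}\varphi(n),
\]
and if the $A$-partial sum diverges, Theorem \ref{maintheorem} (applied with the same $\epsilon$) concludes the argument.

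The substantive case is $\sum_{n \in B}(\psi/n)^{1+\epsilon}\varphi = \infty$. Here $\tilde\psi \equiv 1/2$ on $B$, and verification of Theorem \ref{maintheorem}'s hypothesis for $\tilde\psi$ reduces to showing $\sum_{n \in B}\varphi(n)/n = \infty$, after which $\sum_n \tilde\psi^{1+\epsilon'}\varphi/n \geq (1/2)^{1+\epsilon'}\sum_{n \in B}\varphi(n)/n = \infty$ for any $\epsilon' > 0$. This is the main obstacle, since a termwise comparison with the given HPV1 sum fails when $\psi$ is unbounded on $B$: the divergence can be sustained by sparse $n$'s with large $\psi(n)$. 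The plan is to decompose $B = \bigsqcup_{k \geq 0} B_k$ dyadically with $B_k = \{n \in B : 2^{k-1} < \psi(n) \leq 2^k\}$ and split into two regimes. When the HPV1 divergence on $B$ is carried by those $B_k$ with $k$ bounded by a fixed $K_0$ (so $\psi$ is essentially bounded), one has $(\psi/n)^{1+\epsilon} \asymp n^{-(1+\epsilon)}$ up to a constant depending on $K_0$, so $\sum \varphi/n^{1+\epsilon}$ diverges on that part, forcing $\sum \varphi/n = \infty$ there as well. When instead the divergence requires $\psi(n)$ to grow faster than any polylogarithm in $n$ on an infinite subset of $B$, one invokes Iwaniec's bound $g(n) \ll (\log n)^2$ on the Jacobsthal function: for such $n$ the intervals around the coprime fractions $m/n$ cover $[0,1]$, so $\mathcal{M}(E_n) = 1$ for the associated sets $E_n = \{x \in [0,1] : \exists\, m,\, (m,n)=1,\, |xn-m|<\psi(n)\}$, and $W(\psi) = [0,1]$ follows at once.
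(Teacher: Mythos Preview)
Your Case $A$ is fine and matches the paper. The problem is Case $B$: the two regimes you name are not complementary. Take, for instance, $\psi(n)=(\log n)^3$ on all of $B$; then every $B_k$ is finite, so no fixed $K_0$ carries the divergence (Regime~1 fails), yet $\psi$ does not dominate $(\log n)^4$ (Regime~2, ``faster than any polylogarithm'', also fails). Your appeal to Iwaniec only needs $\psi(n)\gg(\log n)^2$ infinitely often, so ``any polylogarithm'' is too strong a threshold; with the correct threshold $(\log n)^2$ the dichotomy becomes a genuine either/or and the argument can be salvaged: in the complementary case $\psi(n)=O((\log n)^2)$ one has $(\psi(n)/n)^{1+\epsilon}\varphi(n)\le (\log n)^{2(1+\epsilon)}n^{-\epsilon}\cdot\varphi(n)/n<\varphi(n)/n$ for all large $n$, whence $\sum_{n\in B}\varphi(n)/n=\infty$ as you wanted. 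The dyadic decomposition into the $B_k$ then plays no role.

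The paper's route is much shorter and avoids the Jacobsthal function entirely. In the analogue of your Case $B$ (its $\mathbb{Y}=\{n:\psi(n)\ge1\}$) it simply observes that, after the same reduction to $\psi(n)<n$, the inequality $(\psi(n)/n)^{1+\epsilon}\le\psi(n)/n$ gives $\sum_{n\in\mathbb{Y}}\psi(n)\varphi(n)/n=\infty$; and on $\mathbb{Y}$ one has $\psi(n)\ge1\ge(\varphi(n)/n)^R$ for every $R>0$, so Harman's condition A5 applies directly and yields $\mathcal{M}(W(\psi))=1$ without any further case splitting or appeal to Iwaniec.
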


\begin{proof}
We divide $\mathbb{N}$ into two parts $\mathbb{X},\mathbb{Y}$, that
is, $n\in\mathbb{X}$ or $n\in\mathbb{Y}$ according as $\psi(n)<1$ or
not, and have two cases to consider.

\textsc{Case 1}: Suppose
$\sum_{n\in\mathbb{X}}\big(\frac{\psi(n)}{n}\big)^{1+\epsilon}\cdot\varphi(n)=\infty$.
Obviously,
$\sum_{n\in\mathbb{X}}\psi(n)^{1+\epsilon}\cdot\frac{\varphi(n)}{n}=\infty$.
Thus ${\mathcal M}(W(\psi))=1$ follows from applying Theorem
\ref{maintheorem}.

\textsc{Case 2}: Suppose
$\sum_{n\in\mathbb{Y}}\big(\frac{\psi(n)}{n}\big)^{1+\epsilon}\cdot\varphi(n)=\infty$.
In this case it is easy to prove that
$\sum_{n\in\mathbb{Y}}\frac{\psi(n)}{n}\cdot\varphi(n)=\infty$.
 Applying Harman's Condition A5 gives ${\mathcal M}(W(\psi))=1$.

This finishes the proof.
\end{proof}

\begin{remark}\label{remark14}
Similar to the proof of the HPV2 theorem, one may expect that as an
application of Theorem \ref{maintheorem} and the Beresnevich-Velani
Transference Principle the following proposition might be
true:\begin{itemize}
\item
$\sum_{n\in\mathbb{N}}\frac{\psi(n)^{1-\epsilon}}{n^{1-2\epsilon}}\cdot\varphi(n)=\infty\
(\forall\epsilon>0)\Rightarrow \mbox{Hausdorff dimension of}\
W(\psi)=1$.
\end{itemize}
We remark that even if the above proposition is true,  it cannot
give any genuine improvement of the HPV2 theorem. The reason is as
follows: If there are infinitely many $n\in\mathbb{N}$ so that
$\psi(n)\geq1$, then by the BH theorem  $W(\psi)$ has full Hausdorff
dimension. Thus to study what sufficient condition can guarantee
$W(\psi)$ has full Hausdorff dimension, we may assume without loss
of generality that $\psi(n)<1$ for all $n\in\mathbb{N}$. Suppose
this is the case, then
$\sum_{n\in\mathbb{N}}\frac{\psi(n)^{1-\epsilon}}{n^{1-2\epsilon}}\cdot\varphi(n)=\infty$
implies
$\sum_{n\in\mathbb{N}}\frac{\psi(n)^{1-2\epsilon}}{n^{1-2\epsilon}}\cdot\varphi(n)=\infty$.
Now we can apply the HPV2 theorem to ensure  $W(\psi)$ has full
Hausdorff dimension.
\end{remark}

\begin{Corollary}\label{corollary 15}
Let $\psi,\gamma:\mathbb{N}\rightarrow[0,\infty)$ be any functions
 with
$\limsup_{n\rightarrow\infty}\frac{\log \gamma(n)}{\log n}\leq0$.
Then $W(\psi)$ has full Hausdorff dimension if
\[\sum_{n\in\mathbb{N}}\psi(n)\cdot\gamma(n)=\infty.\]
\end{Corollary}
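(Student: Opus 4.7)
The plan is to deduce Corollary \ref{corollary 15} from the HPV2 theorem, whose hypothesis is the divergence of $\sum_n(\psi(n)/n)^{1-\epsilon}\varphi(n)$ for every $\epsilon>0$. The condition $\limsup_{n\to\infty}\log\gamma(n)/\log n\leq 0$ says exactly that $\gamma(n)\leq n^{\delta}$ eventually for every $\delta>0$; combined with the classical Hardy--Ramanujan lower bound $\varphi(n)\gg n/\log\log n$, this will let us convert $\sum_n\psi(n)\gamma(n)=\infty$ into the desired divergence.

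Following the philosophy of Remark \ref{remark14}, I would first dispose of the case in which $\psi(n)\geq 1$ for infinitely many $n$: there, $\sum_n\psi(n)$ trivially diverges and the BH theorem immediately yields full Hausdorff dimension. Otherwise $\psi(n)<1$ for all sufficiently large $n$, and since altering $\psi$ at finitely many integers affects neither $W(\psi)$ nor the divergence of $\sum_n\psi(n)\gamma(n)$, we may assume $\psi(n)<1$ for every $n\in\mathbb{N}$.

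Now fix $\epsilon>0$. From $\psi(n)<1$ we have $\psi(n)^{1-\epsilon}\geq\psi(n)$, and therefore
\[\Big(\frac{\psi(n)}{n}\Big)^{1-\epsilon}\varphi(n)=\psi(n)^{1-\epsilon}\cdot\frac{\varphi(n)}{n}\cdot n^{\epsilon}\gg\frac{\psi(n)\,n^{\epsilon}}{\log\log n}\qquad(n\geq 3).\]
Taking $\delta=\epsilon/2$ in the hypothesis on $\gamma$ gives $\gamma(n)\leq n^{\epsilon/2}$ for large $n$, so $\sum_n\psi(n)n^{\epsilon/2}\geq\sum_n\psi(n)\gamma(n)=\infty$. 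Because $n^{\epsilon/2}/\log\log n\to\infty$, this forces $\sum_n\psi(n)n^{\epsilon}/\log\log n=\infty$ and hence $\sum_n(\psi(n)/n)^{1-\epsilon}\varphi(n)=\infty$. Applying HPV2 concludes the argument.

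As Remark \ref{remark14} already hints, there is no serious obstacle here: Corollary \ref{corollary 15} is essentially a convenient reformulation of HPV2, and all of the content is contained in the elementary inequalities above together with the standard lower bound on the totient. The only mild subtlety is the bookkeeping step that lets one assume $\psi(n)<1$ identically, without which the simple inequality $\psi(n)^{1-\epsilon}\geq\psi(n)$ would run the wrong way.
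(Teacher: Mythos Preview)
Your argument is correct and follows essentially the same route as the paper: reduce to the case $\psi(n)<1$ via the BH theorem, then for each $\epsilon>0$ bound $\psi(n)\gamma(n)$ from above by $(\psi(n)/n)^{1-\epsilon}\varphi(n)$ for large $n$, and invoke HPV2. The only difference is cosmetic: you spell out the Hardy--Ramanujan lower bound $\varphi(n)/n\gg 1/\log\log n$ and the intermediate step $\sum_n\psi(n)n^{\epsilon/2}=\infty$, whereas the paper compresses the comparison into a single displayed inequality.
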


\begin{proof}
As discussed in  Remark \ref{remark14}, to prove Corollary
\ref{corollary 15} we may assume without loss of generality that
$\psi(n)<1$ for all $n\in\mathbb{N}$. Since
$\limsup_{n\rightarrow\infty}\frac{\log \gamma(n)}{\log n}\leq0$, we
observe that for any $\epsilon>0$,
\[\psi(n)\cdot\gamma(n)\leq\big(\frac{\psi(n)}{n}\big)^{1-\epsilon}\cdot\varphi(n)\]
holds for sufficiently large $n$. According to the HPV2 theorem we
obtain that $W(\psi)$ has full Hausdorff dimension. This finishes
the proof.
\end{proof}

\section{Proof of the main result}

\textsc{A theorem of Harman}: Let $\psi_1,\ldots,\psi_k$ ($k\geq2$)
be functions of $n\in\mathbb{N}$, taking values in $[0,1]$. Write
\begin{equation}\theta(n)=\prod_{j=1}^k\psi_j(n),\end{equation}
and suppose for some positive reals $\delta$ and $K$, that for each
$n$ with $\theta(n)\neq0$,
\begin{equation}\max_{1\leq j\leq k}\frac{\theta(n)}{\psi_j(n)}\leq K\cdot\theta(n)^{\delta}.\end{equation} A theorem of Harman (\cite[Theorem 2]{Harman}) on the
higher-dimensional Duffin-Schaeffer conjecture claims that if
\begin{equation}\sum_{n\in\mathbb{N}}\theta(n)\cdot(\frac{\varphi(n)}{n})^k=\infty,\end{equation}
then ${\mathcal M}_k(W(\psi_1,\ldots,\psi_k))=1$, where  ${\mathcal
M}_k$ is the $k$-dimensional Lebesgue measure,
$W(\psi_1,\ldots,\psi_k)$ is the set of $(x_1,\ldots,x_k)\in[0,1]^k$
for which
\begin{equation}|nx_j-m_j|<\psi_j(n)\ \ \ (n,m_j)=1,\ \ \ j=1,\ldots,k\end{equation}
for infinitely many
$(n,m_1,\ldots,m_k)\in\mathbb{N}\times\mathbb{Z}^k$.

\textsc{Proof of Theorem \ref{maintheorem}}:  Obviously, we may
assume that $0<\epsilon<1$. We divide $\mathbb{N}$ into two parts
$\mathbb{X},\mathbb{Y}$, that is,
$n\in\mathbb{X}\Leftrightarrow\psi(n)\leq\big(\frac{\varphi(n)}{n}\big)^{2/\epsilon},$
$n\in\mathbb{Y}\Leftrightarrow\psi(n)>\big(\frac{\varphi(n)}{n}\big)^{2/\epsilon}.$
Since $\psi(n)<1$ for each $n\in\mathbb{N}$, we can deduce from
$\sum_{n\in\mathbb{N}}\psi(n)^{1+\epsilon}\cdot\frac{\varphi(n)}{n}=\infty$
that $\sum_{n\in\mathbb{N}}\psi(n)\cdot\frac{\varphi(n)}{n}=\infty.$
Now we have two cases to consider.

\textsc{Case 1}: Suppose
$\sum_{n\in\mathbb{Y}}\psi(n)\cdot\frac{\varphi(n)}{n}=\infty$. Then
according to Harman's Condition A5, we have ${\mathcal
M}(W(\psi))=1$.

\textsc{Case 2}: Suppose
$\sum_{n\in\mathbb{Y}}\psi(n)\cdot\frac{\varphi(n)}{n}<\infty$.
Obviously,
$\sum_{n\in\mathbb{Y}}\psi(n)^{1+\epsilon}\cdot\frac{\varphi(n)}{n}<\infty$.
We define $\Psi(n)=\psi(n)$ if $n\in\mathbb{X}$, and $\Psi(n)=0$ if
$n\in\mathbb{Y}$. It is easy to check that for any $n\in\mathbb{N}$,
\[\Psi(n)\leq\big(\frac{\varphi(n)}{n}\big)^{2/\epsilon},\]
and
\[\sum_{n\in\mathbb{N}}\Psi(n)^{1+\epsilon}\cdot\frac{\varphi(n)}{n}=\infty.\]
Now it is time  to apply Harman's theorem introduced in the
beginning of this section. Let $k=2$ and let $\psi_1(n)=\Psi(n)$,
$\psi_2(n)=\Psi(n)^{\epsilon}\cdot\frac{n}{\varphi(n)}$. Then for
any $n\in\mathbb{N}$ with $\theta(n)\neq0$, equivalently,
$\Psi(n)\neq0$, one has
\begin{equation}
\frac{\theta(n)}{\psi_1(n)}=\Psi(n)^{\epsilon}\cdot\frac{n}{\varphi(n)}\leq\Psi(n)^{\epsilon/2}
=\big(\Psi(n)^{1+\epsilon}\big)^{\frac{\epsilon}{2(1+\epsilon)}}\leq\theta(n)^{\frac{\epsilon}{2(1+\epsilon)}},
\end{equation}
and similarly,
\begin{equation}
\frac{\theta(n)}{\psi_2(n)}=\Psi(n)=\big(\Psi(n)^{1+\epsilon}\big)^{\frac{1}{1+\epsilon}}\leq\theta(n)^{\frac{1}{1+\epsilon}}.
\end{equation}
Noting $\epsilon\in(0,1)$, and
$\theta(n)=\Psi(n)^{1+\epsilon}\cdot\frac{n}{\varphi(n)}\leq\Psi(n)^{1+\frac{\epsilon}{2}}<1$,
we have
\begin{equation}\max_{1\leq j\leq2}\frac{\theta(n)}{\psi_j(n)}\leq\theta(n)^{\frac{\epsilon}{2(1+\epsilon)}}.\end{equation}
Observe also that $\psi_1(n)<1$,
$\psi_2(n)=\Psi(n)^{\epsilon}\cdot\frac{n}{\varphi(n)}\leq\Psi(n)^{\frac{\epsilon}{2}}<1$,
and
\begin{equation}\sum_{n\in\mathbb{N}}\theta(n)\cdot\big(\frac{\varphi(n)}{n}\big)^2
=\sum_{n\in\mathbb{N}}\Psi(n)^{1+\epsilon}\cdot\frac{\varphi(n)}{n}=\infty.\end{equation}
With these preparations we can apply Harman's theorem to get
${\mathcal M}_2(W(\psi_1,\psi_2))=1$. Now we note an elementary
relation, that is, $W(\psi_1,\psi_2)\subset W(\psi_1)\times
W(\psi_2).$ As an immediate consequence, ${\mathcal
M}(W(\psi_1))=1$. Since $\psi_1\leq\psi$, we have $W(\psi_1)\subset
W(\psi)$, which gives ${\mathcal M}(W(\psi))=1$. This finishes the
 proof of Theorem \ref{maintheorem}.

\textbf{Acknowledgements}. The author would like to thank the
anonymous referee for pointing out one mistake made in the earlier
version and careful reading of the whole manuscript. He also thanks
Hao Wang for helpful discussions. This work was partially supported
by the NSF of China (11001174).

\end{document}